\documentclass[a4paper, 11pt]{article}
\usepackage[utf8]{inputenc}
\usepackage[T1]{fontenc}
\usepackage[english]{babel}
\usepackage{graphicx}
\usepackage{stmaryrd}
\usepackage{tikz}
\usepackage{tikz-cd} 
\usepackage[all]{xy}
\usepackage{comment}
\usepackage{bm}
\usepackage{comment}
\usepackage{amsmath,amsfonts,amssymb}
\usepackage[a4paper]{geometry}
\usepackage{amsthm}
\usepackage{float}
\usepackage{enumitem}
\usepackage{hyperref}
\usepackage{xcolor}
\hypersetup{
    colorlinks=true,
    linkcolor={blue},
    citecolor={blue},
    urlcolor={blue}}
\newtheorem{te}{Theorem}[section]

\newtheorem{qu}[te]{Question}
\newtheorem{lemme}[te]{Lemma}

\theoremstyle{definition}

\theoremstyle{remark}
\newtheorem{rk}[te]{Remark}

\newlength{\plarg}
\usepackage{IEEEtrantools}

\usepackage{typearea}
\usetikzlibrary{cd}
\newcommand{\norm}[1]{\lvert\lvert#1\rvert\rvert}

\usepackage[backend=biber,sorting=nyt, style=alphabetic]{biblatex}
\title{Non-split sharply 2- and 3-transitive groups in $\mathrm{SL}_n(\mathbb{Z})$}
\author{Marco Amelio and Simon André}
\begin{document}

\begin{minipage}{\linewidth}

\maketitle

\begin{abstract}
\noindent We prove that $\mathrm{SL}_3(\mathbb{Z})$ contains a non-split sharply 2-transitive subgroup, answering a question of Glasner and Gulko. We also prove that $\mathrm{SL}_4(\mathbb{Z})$ contains a non-split sharply 3-transitive subgroup, but that $\mathrm{SL}_3(\mathbb{Z})$ does not contain an infinite sharply 3-transitive subgroup. 
\end{abstract}
%\date{\today}

\end{minipage}

%Add references: new examples of non-split S2T groups : Rips-Tent, André-Tent, André-Guirardel, Amelio-André-Tent, de la Nuez-Sullivan

%Quelques idées / remarques : 

%1) GL(n,R) agit sur M(n,R) par conjugaison. Le noyau de cette action est le centred de GL(n,R) (qui est égal aux homothéties), donc PGL(n,R) agit fidèlement sur M(n,R), ce qui fournit un plongement de PGL(n,R) dans GL(n^2,R). Donc GL(9,R) contient un groupe S3T.

\section{Introduction}

\footnotetext[0\def\thefootnote{}]{The first author was supported until August 2025 by the German Research Foundation (DFG) under Germany’s Excellence Strategy EXC 2044–390685587, Mathematics Münster: Dynamics–Geometry–Structure and by CRC 1442 Geometry: Deformations and Rigidity. The first author was supported from November 2026 by the German Research Foundation (DFG) under the grant 541703614 `Mapping class groups from above and below'.}

Let $n\geq 1$ be an integer. An action of a group $G$ on a set $X$ with $\vert X\vert \geq n$ is said to be sharply $n$-transitive if, for any two $n$-tuples of distinct elements in $X$, there exists a unique $g\in G$ mapping the first to the second. We say that a group $G$ is sharply $n$-transitive if there exists a set $X$ with a sharply $n$-transitive action of $G$ on $X$. Sharply $n$-transitive groups are fully classified for $n\geq 4$ and are known to be finite. In contrast, the cases $n=2$ and $n=3$ include infinite groups, such as $\mathrm{AGL}_1(K)$ acting on a field $K$ and $\mathrm{PGL}_2(K)$ for its natural action on $\mathbb{P}^1(K)$, respectively.

We define a sharply 2-transitive group as split if it contains a non-trivial normal abelian subgroup, or equivalently if it is isomorphic to $\mathrm{AGL}_1(K)$ for some near-field $K$ (see \cite{kerby} or \cite[Section 11.4]{BN94} for the definition of a near-field); similarly, a sharply 3-transitive group is split if the stabiliser of a point, which is a sharply 2-transitive group, is split. Whether every sharply 2-transitive group is necessarily split remained a long-standing open problem until Rips, Segev, and Tent constructed the first non-split example in \cite{rips_segev_tent} (see also \cite{rips_tent,andre_tent,andre_guir_fin_gen_simple,amelio_andre_tent,amelio,nuez_sullivan} for further examples of non-split sharply 2-transitive groups). The first (and so far the only) example of a non-split sharply 3-transitive group was later constructed by Tent in \cite{tentS3T} (for more details on sharply 2- and 3-transitive groups, see \cite{kerby,BN94,tent,tent2}). More recently, Glasner and Gulko proved in \cite{glasnergulko} that $\mathrm{SL}_3(\mathbb{R})$ contains a non-split sharply 2-transitive group by adapting the construction of \cite{rips_segev_tent}, while proving that $\mathrm{SL}_2(K)$ contains no such subgroup for any field $K$. Then, they asked the following question:

\begin{qu}
Does the group $\mathrm{SL}_n(R)$ contain non-split sharply 2-transitive subgroups for smaller rings such as the field of algebraic numbers $R=\bar{\mathbb{Q}}$, the rationals $R=\mathbb{Q}$ or even the integers $R=\mathbb{Z}$? 
\end{qu}

In this short note, using results of \cite{tent_ziegler2}, we prove the following result, which answers this question positively for $R=\mathbb{Z}$.

\begin{te}\label{theorem1}
The group $\mathrm{SL}_3(\mathbb{Z})$ contains a non-split sharply 2-transitive subgroup.
\end{te}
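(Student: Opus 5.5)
The plan is to reduce Theorem~\ref{theorem1} to two facts. First, there is a non-split sharply 2-transitive group that is a free product of a group of order $2$ with a free group of at most countable rank. Second, every such free product embeds in $\mathrm{SL}_3(\mathbb{Z})$. Since being sharply 2-transitive and non-split is a property of the abstract group, the image of such an embedding is the required subgroup.

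\emph{Step 1 (the group).} From the results of \cite{tent_ziegler2} I would extract a non-split sharply 2-transitive group $G$ of characteristic $0$ that is isomorphic to $\mathbb{Z}/2\ast F$, with $F$ free of countable rank. The expected mechanism is the Rips--Segev--Tent / Tent--Ziegler iterative construction. One starts from an involution $t$ and at each step freely adjoins new elements, either an element conjugating one involution to another or a new involution, so as to realise the 2-transitivity requirements. Each step is an HNN extension or free product in which the point stabiliser $C(t)$ stays malnormal. The whole difficulty is to check that the direct limit stays of the form $\mathbb{Z}/2\ast F$ rather than an amalgam with non-trivial relations. I would verify this by showing that every extension step is a free product with $\mathbb{Z}$ or with $\mathbb{Z}/2$, and that a free product of copies of $\mathbb{Z}/2$ and $\mathbb{Z}$ embeds in $\mathbb{Z}/2\ast F_\infty$ by the Kurosh subgroup theorem. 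Non-splitness would be taken from \cite{tent_ziegler2}: a free product of this kind has no non-trivial normal abelian subgroup.

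\emph{Step 2 (the embedding).} $\mathrm{PSL}_2(\mathbb{Z})\cong \mathbb{Z}/2\ast\mathbb{Z}/3$ contains $\mathbb{Z}/2\ast F_\infty$. For instance, the kernel of the map $\mathbb{Z}/2\ast\mathbb{Z}/3\to\mathbb{Z}/3$ is, by Kurosh, $\mathbb{Z}/2\ast\mathbb{Z}/2\ast\mathbb{Z}/2$. This in turn contains $\mathbb{Z}/2\ast F_\infty$: take one involution together with a free subgroup of infinite rank inside the index-two free subgroup, chosen in free position via a ping-pong argument. Next, $\mathrm{PGL}_2(\mathbb{Z})$ acts faithfully on the rank-3 lattice of integral binary quadratic forms $ax^2+bxy+cy^2$ by $g\cdot q=q\circ g^{-1}$ (the symmetric square twisted by the determinant). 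The kernel consists of $\pm I$, and the determinant of the action is $1$ on $\mathrm{PSL}_2(\mathbb{Z})$. This gives an injection $\mathrm{PSL}_2(\mathbb{Z})\hookrightarrow\mathrm{SL}_3(\mathbb{Z})$, and composing yields $G\hookrightarrow\mathrm{SL}_3(\mathbb{Z})$.

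\emph{Main obstacle.} The genuine content lies in Step 1: obtaining a non-split sharply 2-transitive group whose isomorphism type is this restricted (virtually free with only $2$-torsion), with the free factors arranged exactly. If the construction of \cite{tent_ziegler2} instead produces a free product of several copies of $\mathbb{Z}/2$ and free groups, possibly of uncountable rank, I would first pass to a countable elementary substructure, or redo the countable construction, to reach countable rank. The Kurosh argument then still places it inside $\mathbb{Z}/2\ast\mathbb{Z}/3$, so Step 2 is unaffected.
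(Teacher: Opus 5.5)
There is a genuine gap, and it is in Step 1: the group you want to extract does not exist. In any infinite sharply 2-transitive group $G$ acting on $X$, every involution $t$ has infinite centraliser. To see this, note that $t$ fixes at most one point, so pick $x$ with $tx\neq x$. For every $y$ with $ty\neq y$, let $h_y$ be the unique element with $h_y(y)=x$ and $h_y(ty)=tx$. Then $h_y^{-1}th_y$ agrees with $t$ on the two distinct points $y$ and $ty$, so $h_y^{-1}th_y=t$ by sharpness. Since $y\mapsto h_y$ is injective, $C_G(t)$ is infinite; this is the involution analogue of the paper's lemma on elements of order 3 in sharply 3-transitive groups. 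In the characteristic-0 setting you describe it is even more immediate: $C(t)$ is the point stabiliser and must act regularly on the infinite set $X\setminus\{x\}$.

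On the other hand, in $\mathbb{Z}/2\ast F$ every involution is conjugate to the generator $t$ of the $\mathbb{Z}/2$ factor. Moreover $C(t)=\langle t\rangle$, because the centraliser of a non-trivial element of a free factor lies in that factor. Hence $\mathbb{Z}/2\ast F$ admits no sharply 2-transitive action when $F\neq 1$, and it is finite (hence split) when $F=1$. No Kurosh bookkeeping or passage to countable rank can fix this. The group actually provided by \cite{tent_ziegler2} is $(F_\omega\times\mathbb{Z}/2)\ast F_\omega$, in which the involution commutes with a free group of infinite rank, and that direct factor is essential.

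The same obstruction means Step 2 cannot be rescued by swapping in a different group. In $\mathrm{PSL}_2(\mathbb{Z})\cong\mathbb{Z}/2\ast\mathbb{Z}/3$ all involution centralisers have order $2$. So no subgroup of $\mathrm{PSL}_2(\mathbb{Z})$ is an infinite sharply 2-transitive group, and neither is any subgroup of its image under your symmetric-square embedding into $\mathrm{SL}_3(\mathbb{Z})$.

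What the proof really needs is an involution in $\mathrm{SL}_3(\mathbb{Z})$ whose centraliser contains a non-abelian free group, placed so that further elements can still play ping-pong against it. The paper takes $I=\mathrm{diag}(-1,-1,1)$, which commutes with the block matrices $A,B$ coming from $\mathrm{SL}_2(\mathbb{Z})$, together with $C=UBU^{-1}$. After passing to high powers of $A,B,C$, a ping-pong argument on $\mathrm{P}(\mathbb{R}^3)$ gives $\langle A,B,I,C\rangle\cong(F_2\times\mathbb{Z}/2)\ast\mathbb{Z}$. This contains $(F_2\times\mathbb{Z}/2)\ast F_2$, and therefore the Tent--Ziegler group.
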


\begin{rk}
It is worth noting that $\mathrm{SL}_3(\mathbb{Z})$ does not contain an infinite split sharply 2-transitive subgroup. Indeed, if $G$ is a split sharply 2-transitive group, then it is isomorphic to $N\rtimes H$ for some abelian group $N$, where $H$ acts on $N\setminus \lbrace 1\rbrace$ transitively. By a theorem due to Malcev \cite{Malcev}, every abelian subgroup of $\mathrm{GL}_n(\mathbb{Z})$ is finitely generated. Hence, if $G$ embeds into $\mathrm{GL}_n(\mathbb{Z})$ then $N$ is finitely generated, therefore $N$ must be finite, otherwise $\mathrm{Aut}(N)$ could not act transitively on $N\setminus \lbrace 1\rbrace$, which proves that $G$ must be finite.
\end{rk}

We also address the case of non-split sharply 3-transitive groups, using the main result of \cite{tentS3T}.

\begin{te}\label{theorem2}
The group $\mathrm{SL}_4(\mathbb{Z})$ contains a non-split sharply 3-transitive subgroup, but the group $\mathrm{SL}_3(\mathbb{Z})$ does not contain an infinite sharply 3-transitive subgroup, whether split or non-split. 
\end{te}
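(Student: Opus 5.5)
For the positive half, the plan is to show that a non-split sharply 3-transitive group as in \cite{tentS3T} can be realised inside $\mathrm{SL}_4(\mathbb{Z})$ by controlling its isomorphism type. Tent's construction is countable and obtained by iterated free products and HNN-type extensions over finite or cyclic subgroups. I would first revisit that construction, started from a suitable base group, and verify one of two things. Either the resulting group $G$ is a free product $\ast_{k\in\mathbb{N}} F_k \ast F$, with each $F_k$ isomorphic to one of finitely many finite groups and $F$ free. Or, more flexibly, $G$ is a directed union of such free products, with each inclusion a free factor inclusion. This is the analogue of what \cite{tent_ziegler2} provides in the 2-transitive case used for Theorem \ref{theorem1}.

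Granting that structural description, the embedding is a ping-pong argument. The finitely many finite groups occurring in the construction (the point stabiliser data; in characteristic $2$ essentially $\mathbb{Z}/2$, and at worst small groups such as $\mathbb{Z}/2\times\mathbb{Z}/2$ and $S_3$) all embed in $\mathrm{SL}_4(\mathbb{Z})$, for instance via signed permutation matrices. Choose a loxodromic element $g\in\mathrm{SL}_4(\mathbb{Z})$ in general position with respect to these finite subgroups. Then the conjugates $g^{N_k}F_kg^{-N_k}$, for a sufficiently sparse sequence $N_k$, generate their free product by ping-pong on projective space, with a free factor added in the same way. One also has to check that the embedding is compatible with the directed system, that is, that the ping-pong sets can be chosen once for all stages; I expect this to be routine once the free product decomposition is available.

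For the negative half, let $G\le\mathrm{SL}_3(\mathbb{Z})$ be sharply 3-transitive on an infinite set $X$, and fix $x\neq y$ in $X$. Then $T=G_{x,y}$ acts regularly on $X\setminus\{x,y\}$ and is therefore infinite. Moreover $T$ has index $2$ in the setwise stabiliser $\tilde T$ of $\{x,y\}$, and the elements of $\tilde T\setminus T$ are involutions. The first step is to show that $\tilde T$ contains an involution $i$ centralising a finite-index subgroup of $T$. In characteristic $\neq 2$, I would take the unique involution of $T$, which is central in $\tilde T$. In characteristic $2$, I would use the product of two of the swapping involutions, which normalise $T$. In $\mathrm{SL}_3(\mathbb{Z})$ every involution is $\mathbb{Q}$-conjugate to $\mathrm{diag}(1,-1,-1)$. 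Its centraliser is therefore commensurable with $\mathrm{GL}_2(\mathbb{Z})$, hence virtually free. Consequently $T$ is an infinite virtually free group, and in particular contains an element $t$ of infinite order.

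The second step, which I expect to be the main obstacle, is to derive a contradiction from this. Sharp 3-transitivity forces strong relations: every involution swapping $x$ and $y$ inverts $T$ modulo the relevant near-domain twisting. Hence $\tilde T$ contains many conjugates of $t$ inverted by involutions. Meanwhile, the centraliser in $\mathrm{SL}_3(\mathbb{Z})$ of a hyperbolic-type $t$ is virtually $\mathbb{Z}^2$ or $\mathbb{Z}$. I would try to show that $T$ is virtually abelian, and hence, by Malcev's theorem, finitely generated abelian up to finite index. I would then contradict the fact that $T$ acts regularly while its normaliser acts with controlled orbits. Alternatively, one can obtain the contradiction by showing that the point stabiliser $G_x$ would be split with an infinite abelian normal subgroup, which is impossible by the Remark following Theorem \ref{theorem1}. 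The delicate point is passing from "virtually free" to "virtually abelian" using only the 3-transitivity identities, and I would attack it first in characteristic $\neq2$.
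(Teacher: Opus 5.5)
Both halves have genuine gaps, and they share a root: you never use elements of order $3$.

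\textbf{Positive half.} The structural description you hope to extract from Tent's construction cannot hold for any infinite sharply $3$-transitive group.

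In a free product $\ast_k F_k\ast F$ whose factors $F_k$ are finite of bounded order, every non-trivial torsion element is conjugate into some $F_k$. Its centraliser lies in that conjugate of $F_k$, so all torsion centralisers are finite of uniformly bounded order. This persists in any directed union of such groups.

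But a sharply $3$-transitive group always contains elements of order $3$: the unique $g$ sending $(a,b,c)$ to $(b,c,a)$ satisfies $g^3=1$. When $G$ is infinite, such a $g$ has infinite centraliser. To see this, fix $x$ not fixed by $g$. For each $y$ not fixed by $g$, let $h_y$ be the unique element sending $(y,g(y),g^2(y))$ to $(x,g(x),g^2(x))$. Then $h_y^{-1}gh_y$ and $g$ agree on three points, so $h_y$ commutes with $g$, and $y\mapsto h_y$ is injective.

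So your ``revisit and verify'' step must fail. Your analogy with \cite{tent_ziegler2} is also off, since $(F_\omega\times\mathbb{Z}/2\mathbb{Z})\ast F_\omega$ is not of that shape either. Tent's group is $((F_\omega\times\mathbb{Z}/3\mathbb{Z})\ast_{\mathbb{Z}/3\mathbb{Z}}S_3\ast_{\mathbb{Z}/2\mathbb{Z}}(\mathbb{Z}/2\mathbb{Z}\times F_\omega))\ast F_\omega$. It embeds in the finitely generated group obtained by replacing $F_\omega$ with $F_2$.

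The real work is then to find a copy $\langle I,J\rangle\simeq S_3$ in $\mathrm{SL}_4(\mathbb{Z})$ with two properties:
\begin{itemize}
\item the order-$3$ element $J$ centralises a free group $\langle A,B\rangle$, supported on a complementary $2$-dimensional block;
\item the involution $I$ centralises another free group $\langle C,D\rangle$.
\end{itemize}
One then plays ping-pong for the amalgam over $\langle I\rangle$ and adds a free factor. Conjugating finite subgroups by a loxodromic element can never produce these direct-product factors.

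\textbf{Negative half.} The decisive step is missing, as you acknowledge, and your first step already fails in characteristic $2$. A product of two involutions swapping $x$ and $y$ lies in $T$ and is generally not an involution. For example, in $\mathrm{PGL}_2(K)$ with $\mathrm{char}\,K=2$, composing $z\mapsto a/z$ with $z\mapsto b/z$ gives $z\mapsto (a/b)z$, which never has order $2$. Moreover, each swapping involution there inverts $T$, so it centralises only the identity of $T$.

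The missing idea is again order $3$: combine the centraliser lemma above with the fact that every element $M$ of order $3$ in $\mathrm{GL}_3(\mathbb{Z})$ has finite centraliser.
\begin{itemize}
\item The minimal polynomial of $M$ is forced to be $(x-1)(x^2+x+1)$.
\item Any matrix commuting with $M$ preserves $L_1=\mathbb{Z}^3\cap\ker(M-1)$ and $L_2=\mathbb{Z}^3\cap\ker(M^2+M+1)$, and $L_1\oplus L_2$ has finite index in $\mathbb{Z}^3$.
\item It acts on $L_1$ by $\pm1$, and on $L_2$ by a $\mathbb{Z}[\omega]$-linear automorphism, where $M$ acts on $L_2$ as a primitive cube root of unity $\omega$. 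Such automorphisms form $\mathbb{Z}[\omega]^\times$.
\end{itemize}
Hence the centraliser has order at most $12$; the paper obtains orders $12$ and $6$ from two explicit normal forms. This settles the negative half at once, even in $\mathrm{GL}_3(\mathbb{Z})$, with no need for two-point stabilisers or involutions. It also explains why the positive half needs dimension $4$.
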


\begin{rk}
For the same reason as above, $\mathrm{SL}_4(\mathbb{Z})$ does not contain an infinite split sharply 3-transitive subgroup.
\end{rk}

\subsection*{Acknowledgments} The authors warmly thank the anonymous referee for their comments, which have helped improve the clarity of the paper. We also thank Céline Berthaud for pointing out a mistake in a remark in the first version of the paper.

\section{Projective space and the ping-pong lemma}

We first recall the ping-pong lemma for amalgamated products (see \cite[Chapter III, Proposition 12.4]{lyndonschupp}). In the next section, we only need the lemma for free products, but the version for amalgamated products will be needed in the final section.

\begin{lemme}\label{ping-pong}
Let $A,B$ be subgroups of a group $G$. Define
$C=A\cap B$ and suppose that $[A:C]>2$ and $[B:C]\geq 2$. Suppose that $G$ acts on a set $X$ containing two disjoint non-empty subsets $X_A,X_B$ such that, for all $a\in A\setminus C$, $b\in B\setminus C$ and $c\in C$, we have $a\cdot X_B\subset X_A$, $b\cdot X_A\subset X_B$, $c\cdot X_A\subset X_A$ and $c\cdot X_B\subset X_B$. Then the group $\langle A,B\rangle$ is isomorphic to $A\ast_C B$.\end{lemme}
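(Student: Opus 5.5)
The plan is to build the canonical surjection from the amalgam onto $\langle A,B\rangle$ and show, by a ping-pong argument, that it has trivial kernel. By the universal property of amalgamated products, the inclusions of $A$ and $B$ into $G$ extend to a homomorphism $\varphi\colon A\ast_C B\to G$, and its image is $\langle A,B\rangle$. It remains to check that $\varphi$ is injective. By the normal form theorem for amalgamated products, every element of $A\ast_C B$ outside $C$ can be written as a reduced word $g=g_1g_2\cdots g_k$ with $k\geq 1$. Here each $g_i$ lies in $A\setminus C$ or in $B\setminus C$, and consecutive letters lie in different factors. Elements of $C$ map injectively, so it suffices to show $\varphi(g)\neq 1$ for every such reduced word.

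First treat a word that begins and ends with letters from $A\setminus C$, say $g=a_1b_1a_2\cdots b_{m-1}a_m$. Apply it to $X_B$. By hypothesis, $a_m X_B\subset X_A$, then $b_{m-1}X_A\subset X_B$, and so on, alternating. Hence $\varphi(g)X_B\subset X_A$. Since $X_A\cap X_B=\emptyset$ and $X_B\neq\emptyset$, this forces $\varphi(g)\neq 1$. The case $k=1$ with $g\in A\setminus C$ is included.

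For the remaining shapes, conjugate the word into the first shape. This is where the hypothesis $[A:C]>2$ is used; it is the only delicate step. If $g$ begins and ends in $B\setminus C$, pick $a\in A\setminus C$ and consider $aga^{-1}$, which is reduced of the first shape. If $g$ begins in $A$ and ends in $B$, take $aga^{-1}$ with $a\in A\setminus C$ chosen so that $a^{-1}g_1\notin C$, i.e.\ $a\notin g_1C$. Then $aga^{-1}=a\,g_1\cdots g_k\,a^{-1}$, and merging $a$ into $g_1$ is not needed, since $a$ and $g_1$ both lie in $A$. Write instead $a\,g\,a^{-1}=(ag_1)g_2\cdots g_k a^{-1}$, where $ag_1\in A\setminus C$ provided $a\notin Cg_1^{-1}$. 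We therefore need $a\in A$ avoiding the two cosets $C$ and $Cg_1^{-1}$, which is possible because $[A:C]\geq 3$. The resulting word starts and ends in $A\setminus C$, so its image is nontrivial, hence so is $\varphi(g)$. The case where $g$ begins in $B$ and ends in $A$ is symmetric: conjugate by $a^{-1}\cdot\,\cdot a$, choosing $a$ to avoid $C$ and the relevant coset of the last letter.

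The elements of $C$ preserve $X_A$ and $X_B$, but this plays no role in the argument above. It only guarantees consistency of the setup, because the letters in a reduced word are determined merely up to $C$. Finally, $[B:C]\geq 2$ ensures that $B\setminus C$ is nonempty, so the conjugating argument in the $B$–$B$ case makes sense. The main obstacle is this bookkeeping with cosets of $C$, and the conclusion is that $\varphi$ is an isomorphism onto $\langle A,B\rangle$.
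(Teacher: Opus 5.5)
Your proposal is correct and is the standard proof of this lemma. The paper does not reprove it but cites Lyndon--Schupp (Chapter III, Proposition 12.4), whose argument is the same: reduced words beginning and ending in $A\setminus C$ send $X_B$ into the disjoint set $X_A$, and every other shape is conjugated into this one using $[A:C]\geq 3$.

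One cleanup is needed in the $A$--$B$ case. Delete the first condition $a^{-1}g_1\notin C$ and the claim that merging is not needed, because merging $a$ with $g_1$ is exactly what you then do. The correct requirement, which you state afterwards, is $a\notin C\cup Cg_1^{-1}$.
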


In this paper, we will use the ping-pong lemma to construct amalgamated products in $\mathrm{SL}_n(\mathbb{Z})$ (for $n=3$ or $n=4$) using the natural action of $\mathrm{GL}_n(\mathbb{R})$ on the projective space $\mathrm{P}(\mathbb{R}^n)$. We will say that a matrix $A\in \mathrm{GL}_n(\mathbb{R})$ is \emph{proximal} if the following conditions hold: $A$ is diagonalisable, its eigenvalues belong to $\mathbb{R}$ and satisfy the following inequalities, possibly after renumbering: $\lambda_1 > \lambda_2\geq \cdots \geq \lambda_{n-1}\geq\lambda_n>0$ (hence the eigenspace corresponding to $\lambda_1$ is one-dimensional). Note that these conditions could be considerably relaxed, but that would not serve our purpose. Let $\pi : \mathbb{R}^n\rightarrow\mathrm{P}(\mathbb{R}^n)$ denote the natural surjection. Let $v_i$ denote an eigenvector associated with $\lambda_i$, and define $P_A^{+}=\pi(v_1)$ and $H_A^{+}=\pi(\langle v_2,\ldots,v_n\rangle)$. The point $P_A^{+}$ will be called the attracting point of $A$, and $H^{+}_A$ will be called the repelling hyperplane of $A$. This terminology is justified by the following observation: if $w\in\mathbb{R}^n=\langle v_1\rangle\oplus\langle v_2,\ldots,v_n\rangle$ does not belong to $\langle v_2,\ldots,v_n\rangle$, then it is not difficult to see that $A^n(w)/\norm{A^n(w)}$ converges to $v_1/\norm{v_1}$ as $n$ tends to $+\infty$, and thus $A^n(\pi(w))$ converges to $P_A^{+}$. If $A^{-1}$ is also proximal (i.e.\ if the eigenvalues satisfy $\lambda_1 > \lambda_2\geq \cdots \geq \lambda_{n-1}>\lambda_n>0$), we denote by $P_{A}^{-}$ the attracting point of $A^{-1}$ and by $H^{-}_A$ the repelling hyperplane of $A^{-1}$, and we say that $A$ is \emph{biproximal}. Here is an important application: if two matrices $A,B\in\mathrm{GL}_n(\mathbb{R})$ are biproximal and satisfy $\lbrace P_A^{+},P_A^{-}\rbrace\cap (H_B^{+}\cup H_B^{-})=\varnothing$ and $\lbrace P_B^{+},P_B^{-}\rbrace\cap (H_A^{+}\cup H_A^{-})=\varnothing$, then the subgroup of $\mathrm{GL}_n(\mathbb{R})$ generated by $A^n$ and $B^n$ for some large integer $n$ is free. Indeed, with the notation of Lemma \ref{ping-pong}, one can take for $X_A$ (resp.\ $X_B$) the union of two closed balls of small radius centred at $P_A^{+}$ and $P_A^{-}$ (resp.\ $P_B^{+}$ and $P_B^{-}$).

%Let $V_i$ denote the subspace of $\mathbb{R}^n$ spanned by $\lbrace v_j \ \vert \ 1\leq j \leq n, \ j\neq i\rbrace$ and let $\pi : \mathbb{R}^n\rightarrow \mathrm{P}(\mathbb{R}^n)$ denote the natural projection. A proximal matrix $A$ enjoys the following nice property: if $B(P_1,\varepsilon)$ denotes the ball of small radius $\varepsilon>0$ centred at $P_1$ in $\mathrm{P}(\mathbb{R}^n)$ and if $K\subset \mathrm{P}(\mathbb{R}^n)$ is compact and satisfies $K\cap \pi(V_1)=\varnothing$, then $A^n(K)$ is contained in $B(P_1,\varepsilon)$ for any $n\in\mathbb{N}$ sufficiently large. Similarly, if $K\subset \mathrm{P}(\mathbb{R}^n)$ is compact and satisfies $K\cap \pi(V_n)=\varnothing$, then $A^{-n}(K)$ is contained in $B(P_n,\varepsilon)$ for any $n\in\mathbb{N}$ sufficiently large.

\section{Proof of Theorem \ref{theorem1}}

In \cite{tent_ziegler2}, the authors show that the group $(F_{\omega}\times \mathbb{Z}/2\mathbb{Z})\ast F_{\omega}$ is sharply 2-transitive. This group is a subgroup of $G=(F_2\times \mathbb{Z}/2\mathbb{Z})\ast F_2$, which is virtually free. Moreover, it is well known that $F_2$ embeds into $\mathrm{SL}_2(\mathbb{Z})$, therefore $G$ embeds into $\mathrm{SL}_n(\mathbb{Z})$ for some integer $n\geq 2$ (via the induced representation). Glasner and Gulko proved that $n$ must be equal to or greater than $3$. We will see that it is not difficult to construct an explicit embedding of $(F_2\times \mathbb{Z}/2\mathbb{Z})\ast F_2$ in $\mathrm{SL}_3(\mathbb{Z})$ using a ping-pong argument for the action of $\mathrm{SL}_3(\mathbb{Z})$ on the projective plane $\mathrm{P}(\mathbb{R}^3)$. 

\begin{te}\label{theorem1bis}
The group $\mathrm{SL}_3(\mathbb{Z})$ contains a non-split sharply 2-transitive subgroup. More precisely, $(F_2\times \mathbb{Z}/2\mathbb{Z})\ast F_2$ embeds into $\mathrm{SL}_3(\mathbb{Z})$
\end{te}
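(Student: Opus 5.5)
The first half of the statement follows from the second, granted the result of \cite{tent_ziegler2}. That result says $(F_{\omega}\times \mathbb{Z}/2\mathbb{Z})\ast F_{\omega}$ is sharply 2-transitive and non-split, and this group embeds in $(F_2\times \mathbb{Z}/2\mathbb{Z})\ast F_2$. So it suffices to exhibit an embedding of $(F_2\times \mathbb{Z}/2\mathbb{Z})\ast F_2$ in $\mathrm{SL}_3(\mathbb{Z})$. I plan to build the two free factors separately and then combine them with Lemma \ref{ping-pong} applied to the action on $\mathrm{P}(\mathbb{R}^3)$.

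\textbf{Step 1: the factor $F_2\times\mathbb{Z}/2\mathbb{Z}$.} Embed $\mathrm{GL}_2(\mathbb{Z})$ block-diagonally into $\mathrm{SL}_3(\mathbb{Z})$ via $M\mapsto \mathrm{diag}(M,\det M)$. The element $-I_2\in\mathrm{GL}_2(\mathbb{Z})$ is central, and its image is $\sigma=\mathrm{diag}(-1,-1,1)$. Take a free subgroup $\langle a,b\rangle\subset \mathrm{SL}_2(\mathbb{Z})$ with $-I_2\notin\langle a,b\rangle$, for instance $a=\begin{pmatrix}1&2\\0&1\end{pmatrix}$ and $b=\begin{pmatrix}1&0\\2&1\end{pmatrix}$, which generate a free group inside the level-2 congruence subgroup, and this subgroup does not contain $-I_2$. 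Then $A=\langle a,b,\sigma\rangle\cong F_2\times\mathbb{Z}/2\mathbb{Z}$ inside $\mathrm{SL}_3(\mathbb{Z})$.

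\textbf{Step 2: the factor $F_2$ and the ping-pong.} Fix a biproximal matrix $T\in\mathrm{SL}_3(\mathbb{Z})$, for example a hyperbolic matrix with three distinct positive real eigenvalues, such as a companion matrix of a totally real cubic unit. Let $g\in\mathrm{SL}_3(\mathbb{Z})$ be a generic conjugator. Set $B=\langle T^N, gT^Ng^{-1}\rangle$ for large $N$; by the ping-pong argument for biproximal matrices described after Lemma \ref{ping-pong}, $B$ is free of rank $2$. Now apply Lemma \ref{ping-pong} with $C=\{1\}$ to the pair $A$ and $hBh^{-1}$, and let $X_B$ be a union of small balls around the four attracting and repelling points of the generators of $hBh^{-1}$.

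\textbf{Step 3: the main obstacle.} The factor $A$ is not proximal-generated. It preserves the decomposition $\mathbb{R}^3=\mathbb{R}^2\oplus\mathbb{R}$, and it contains the nontrivial element $\sigma$. We need a set $X_A$, disjoint from $X_B$, with $\alpha X_B\subset X_A$ for every $\alpha\in A\setminus\{1\}$. My plan is to first replace $a,b$ by high powers, so that $\langle a,b\rangle$ plays ping-pong on $\mathrm{P}(\mathbb{R}^2)$. Then I will choose $X_B$ to be a small neighbourhood of finitely many points off $\mathrm{P}(\mathbb{R}^2)\cup\{e_3\}$. I will take $X_A$ to be the $A$-orbit closure of $X_B$ minus $X_B$, or more concretely $\bigcup_{\alpha\neq 1}\alpha X_B$.

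The delicate point is to check that $\alpha X_B\cap X_B=\varnothing$ for all $\alpha\ne1$. The element $\sigma$ is handled by choosing the points so that $\sigma$ moves them. Elements $\alpha$ with nontrivial $F_2$-part push points of $\mathrm{P}(\mathbb{R}^3)\setminus(\{e_3\}\cup \mathrm{P}(\mathbb{R}^2))$ towards $\mathrm{P}(\mathbb{R}^2)$ through the dynamics on the first two coordinates. If this direct approach gets messy, I will instead use a ping-pong for $\langle A, B'\rangle$ in which $X_A$ is a neighbourhood of the projective line $\mathrm{P}(\mathbb{R}^2\oplus0)$ union the point $[e_3]$. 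In that version, high powers of proximal generators of $B'$ send $X_A$ into $X_B$ provided their repelling hyperplanes avoid $X_A$. Then $A\setminus\{1\}$ must map $X_B$ into $X_A$, and I would secure this by passing to a finite-index free subgroup of $\langle a,b\rangle$, chosen so that every nontrivial element has a large entry.
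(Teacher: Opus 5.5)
Your reduction to embedding $(F_2\times\mathbb{Z}/2\mathbb{Z})\ast F_2$ is fine, and so are Steps 1--2. There is one slip: the level-$2$ congruence subgroup \emph{does} contain $-I_2$, since $-1\equiv 1\pmod 2$. What you actually need, and what is true, is that the free group $\langle a,b\rangle$ is torsion-free. The real gap is Step 3. That is where the whole difficulty lies, and you leave it as two unexecuted plans, neither of which works as stated.

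The fallback plan is impossible. Suppose $X_A$ contains the line $\ell=\mathrm{P}(\mathbb{R}^2\oplus 0)$. For every $\beta\in B'\setminus\{1\}$, the image $\beta(\ell)$ is a projective line, so it meets $\ell\subset X_A$. Hence $\beta(X_A)\subset X_B$ is incompatible with $X_A\cap X_B=\varnothing$. In particular, your proviso that the repelling hyperplanes of the generators of $B'$ avoid $X_A$ can never be met, because any two lines in $\mathrm{P}(\mathbb{R}^3)$ intersect.

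In the first plan, the claim that elements with nontrivial $F_2$-part push points towards $\mathrm{P}(\mathbb{R}^2)$ is false. The matrix $\mathrm{diag}(a^N,1)$ fixes every point $[x:0:z]$. For hyperbolic $M\in\langle a^N,b^N\rangle$, the matrix $\mathrm{diag}(M,1)$ pushes points near the contracting eigenline of $M$ towards $[e_3]$. Without passing to powers the plan cannot work at all: $\langle a,b,\sigma\rangle$ acts linearly on the chart $\{z\neq 0\}\cong\mathbb{R}^2$ through the lattice $\Gamma(2)$. Its orbits of vectors of irrational slope are dense, so no $X_B$ with nonempty interior is disjoint from all its nontrivial translates. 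After passing to powers, disjointness would have to come from the action on directions, via a wandering interval of the thin group $\langle a^N,b^N\rangle$ on $\mathrm{P}^1$.

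Even then, you never address the other half of the ping-pong. The set $X_A=\bigcup_{\alpha\neq 1}\alpha X_B$ is an infinite union. It accumulates on the limit set of $\langle a^N,b^N\rangle$ in $\ell$. It also contains translates of balls centred at points lying on the repelling lines of the generators of $B'$ (the line $H_t^{+}$ passes through $P_t^{-}$). Yet $\overline{X_A}$ must avoid all these lines for $B'\setminus\{1\}$ to map $X_A$ into $X_B$, and nothing in your proposal ensures this.

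The missing idea, which is what the paper does, is to avoid unipotents. Generate the free factor by high powers of $\mathrm{diag}(M_1,1)$ and $\mathrm{diag}(M_2,1)$, where $M_1=\begin{pmatrix}2&1\\1&1\end{pmatrix}$ and $M_2=\begin{pmatrix}3&2\\1&1\end{pmatrix}$ are hyperbolic. Their eigenvalues are $\lambda>1>\lambda^{-1}$, so they are biproximal on $\mathrm{P}(\mathbb{R}^3)$. They commute with $\sigma$, and their attracting points lie on $\ell$, which $\sigma$ fixes pointwise.

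Then take $X_A=N_1\cup N_2\cup\sigma(X_B)$, with $N_i$ the usual pairs of small balls. This works because $\sigma$ maps $X_B$ into $\sigma(X_B)$. Every $w\sigma^{\epsilon}$ with $w\neq 1$ in the free factor maps $X_B$ into $N_1\cup N_2$ by ordinary proximal ping-pong. One only needs $X_B\cap\sigma(X_B)=\varnothing$ plus finitely many general-position conditions, including some on the $\sigma$-images of the attracting points of the generators of $B'$. With that change your Step 2 goes through. It uses a rank-two free group in general position, rather than the paper's cyclic second factor (a single conjugate of one block matrix) followed by passing to a subgroup.
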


\begin{proof}
Consider the following four matrices in $\mathrm{SL}_3(\mathbb{Z})$, and note that $I$ commutes with $A$ and $B$:
\[I=\begin{pmatrix}
-1 & 0 & 0  \\
0 & -1 & 0  \\
0 & 0 & 1 
\end{pmatrix}, \ \ A=\begin{pmatrix}
2 & 1 & 0   \\
1 & 1 & 0  \\
0 & 0 & 1  
\end{pmatrix}, \ \ B=\begin{pmatrix}
3 & 2 & 0   \\
1 & 1 & 0  \\
0 & 0 & 1  
\end{pmatrix}, \ \ U=\begin{pmatrix}
0 & 0 & -1   \\
0 & 1 & 0  \\
1 & 0 & 0  
\end{pmatrix}.\]

\begin{comment}
, \ \ V=\begin{pmatrix}
1 & 0 & 0   \\
0 & 1 & 0  \\
1 & 0 & 1  
\end{pmatrix}  
\end{comment}

%The spectrum of $A$ is $\lbrace (3+\sqrt{5})/2, 1,(3-\sqrt{5})/2\rbrace$. 

The eigenvalues of $A$ are $\lambda_1=(3+\sqrt{5})/{2}>\lambda_2=1>\lambda_3=(3-\sqrt{5})/{2}>0$, and the corresponding eigenvectors are
\[
v_1=\begin{pmatrix}1+\sqrt5\\2\\0\end{pmatrix},\quad
v_2=\begin{pmatrix}1-\sqrt5\\2\\0\end{pmatrix},\quad
v_3=\begin{pmatrix}0\\0\\1\end{pmatrix}.
\]
In particular, since all eigenvalues of $A$ are positive and distinct, $A$ is biproximal. The repelling hyperplane $H^{+}_A$, which is spanned (projectively) by $v_2$ and $v_3$, is parametrised by $2x+(\sqrt{5}-1)y=0$ for $(x,y,z)\in\mathbb{R}^3$. Similarly, the repelling hyperplane $H_A^{-}$ is spanned by $v_1$ and $v_2$ and is parametrised by $2x-(\sqrt{5}+1)y=0$. Then, the eigenvalues of $B$ are $\mu_1=2+\sqrt3>\mu_2=1>\mu_3=2-\sqrt3>0$, and the corresponding eigenvectors are
\[
w_1=\begin{pmatrix}1+\sqrt3\\1\\0\end{pmatrix},\quad
w_2=\begin{pmatrix}1-\sqrt3\\1\\0\end{pmatrix},\quad
w_3=\begin{pmatrix}0\\0\\1\end{pmatrix}.
\]
Hence, $B$ is biproximal. A calculation shows that the repelling hyperplanes $H_B^{+},H_B^{-}$ are parametrised by the equations $x+(\sqrt{3}-1)y=0$ and $x-(\sqrt{3}+1)y=0$, respectively. 

Then, define $C=UBU^{-1}$. The attracting points of $C$ and $C^{-1}$ are respectively $P_C^{+}=U(P_B^{+})=[0:1:1+\sqrt{3}]$ and $P_C^{-}=U(P_B^{-})=[0:1:1-\sqrt{3}]$, and their repelling hyperplanes are $H_C^{+}=U(H_B^{+})$, which is parametrised by the equation $(\sqrt{3}-1)y+z=0$, and $H_C^{-}=U(H_B^{-})$, which is parametrised by the equation $(\sqrt{3}+1)y-z=0$.

%Define $C=UBU^{-1}$ and $D=VBV^{-1}$. The attracting points of $C$ and $C^{-1}$ are $P_C^{+}=U(P_B^{+})=[0:1:1+\sqrt{3}]$ and $P_C^{-}=U(P_B^{-})=[0:1:1-\sqrt{3}]$, and their repelling planes are $R_C^{+}=U(R_B^{+})$ parametrised by the equation $(\sqrt{3}-1)y+z=0$ and $R_C^{-}=U(R_B^{-})$ parametrised by the equation $(\sqrt{3}+1)y-z=0$. The attracting points of $D$ and $D^{-1}$ are $P_D^{+}=V(P_B^{+})=[1+\sqrt{3}:1:1+\sqrt{3}]$ and $P_D^{-}=V(P_B^{-})=[1-\sqrt{3}:1:1-\sqrt{3}]$, and their repelling planes are $R_D^{+}=V(R_B^{+})$ parametrised by the equation $x+(\sqrt{3}-1)y=0$ and $R_D^{-}=V(R_B^{-})$ parametrised by the equation $x-(\sqrt{3}+1)y=0$ (note in particular that $R_B^{+}=R_D^{+}$ and $R_B^{-}=R_D^{-}$).

Note that the points $P_A^{+},P_A^{-},P_B^{+},P_B^{-},P_C^{+},P_C^{-},I(P_C^{+}),I(P_C^{-})$ are pairwise distinct.
Note also that for any distinct $M,M'\in\lbrace A,B,C\rbrace$, the points $P_M^{+}$ and $P_M^{-}$ do not belong to $H_{M'}^{+}\cup H_{M'}^{-}$. For $M\in\lbrace A,B,C\rbrace$, define $N_M=\bar{B}(P_M^{+},\varepsilon)\cup \bar{B}(P_M^{-},\varepsilon)$ where $\bar{B}(P,\varepsilon)$ denotes the closed ball of radius $\varepsilon$ centred at $P$ in $\mathbb{P}^2(\mathbb{R})$. For $\varepsilon$ sufficiently small, $\bar{B}(P_M^{+},\varepsilon)$ and $\bar{B}(P_M^{-},\varepsilon)$ have empty intersection, and the following conditions hold:

\begin{itemize}
    \item $N_A\cap (N_B\cup N_C\cup H_B^{+}\cup H_B^{-}\cup H_C^{+}\cup H_C^{-})=\varnothing$,
    \item  $N_B\cap (N_A\cup N_C\cup H_A^{+}\cup H_A^{-}\cup H_C^{+}\cup H_C^{-})=\varnothing$,
    \item $N_C\cap (N_A\cup N_B\cup H_A^{+}\cup H_A^{-}\cup H_B^{+}\cup H_B^{-}\cup I(N_C))=\varnothing$.
\end{itemize}

Define $H=\langle A,B,I\rangle$ and $K=\langle C\rangle$. Then, define the sets $X_H=N_A\cup N_B\cup I(X_K)$ and $X_K=N_C$. Note that for any point $P\in\mathrm{P}(\mathbb{R}^3)$ that does not lie in $H_A^{+}\cup H_A^{-}$, the sequence $(A^nP)_{n\in\mathbb{N}}$ converges to $P_A^{+}$ as $n$ goes to $+\infty$ and converges to $P_A^{-}$ as $n$ goes to $-\infty$. The same observation holds for $B$ and $C$ as well. Therefore, by passing to sufficiently high powers of $A,B,C$, we may assume that the following conditions hold:
\begin{itemize}
    \item $A^{\pm 1}(X_H\cup X_K)\subset \bar{B}(P_A^{\pm 1},\varepsilon)\subset N_A\subset X_H$,
    \item $B^{\pm 1}(X_H\cup X_K)\subset \bar{B}(P_B^{\pm 1},\varepsilon)\subset N_B\subset X_H$,
    \item $C^{\pm1}(X_H\cup X_K)\subset \bar{B}(P_C^{\pm 1},\varepsilon) \subset N_C=X_K$.
\end{itemize}

We also have $I(X_K)\subset X_H$ by definition of $X_H$. Hence, by the ping-pong lemma \ref{ping-pong}, we have $H\simeq F_2\times \mathbb{Z}/2\mathbb{Z}$ and $\langle H,K\rangle\simeq H\ast K\simeq (F_2\times \mathbb{Z}/2\mathbb{Z})\ast \mathbb{Z}$. Finally, the subgroup of $\langle A,B,I,C\rangle$ generated by $H,ACA^{-1},BCB^{-1}$ is isomorphic to $(F_2\times \mathbb{Z}/2\mathbb{Z})\ast F_2$ (this can be proved by using normal forms in the free product $H\ast K$ or by applying the ping-pong lemma in the Bass-Serre tree of $H\ast K$).\end{proof}

\section{Proof of Theorem \ref{theorem2}}

In \cite{tentS3T}, Tent proved that the group $((F_{\omega}\times \mathbb{Z}/3\mathbb{Z}){}_{\mathbb{Z}/3\mathbb{Z}}\ast {S_3}\ast_{\mathbb{Z}/2\mathbb{Z}}( \mathbb{Z}/2\mathbb{Z}\times F_{\omega}))\ast F_{\omega}$ is a non-split sharply 3-transitive group (of characteristic 2). This group embeds into the group $G=((F_2\times \mathbb{Z}/3\mathbb{Z}){}_{\mathbb{Z}/3\mathbb{Z}}\ast {S_3}\ast_{\mathbb{Z}/2\mathbb{Z}}( \mathbb{Z}/2\mathbb{Z}\times F_2))\ast F_2$ (whose isomorphism class is independent of the choice of embeddings of $\mathbb{Z}/3\mathbb{Z}$ into $F_2\times \mathbb{Z}/3\mathbb{Z}$ and $S_3$, as well as the embeddings of $\mathbb{Z}/2\mathbb{Z}$ into $S_3$, since transpositions and 3-cycles are conjugate in $S_3$). Note that this group $G$ is the fundamental group of a finite graph of finite groups, hence it is virtually free, therefore it embeds into $\mathrm{SL}_n(\mathbb{Z})$ for some integer $n\geq 2$. We will prove that $G$ embeds into $\mathrm{SL}_4(\mathbb{Z})$, and that this result is optimal: the smallest integer $n$ such that $\mathrm{GL}_n(\mathbb{Z})$ contains an infinite sharply 3-transitive group $G$ (regardless of whether $G$ is split or not) is $n=4$, and in particular, $n=4$ is the smallest integer such that $\mathrm{SL}_n(\mathbb{Z})$ contains a non-split sharply 3-transitive group since all finite sharply 3-transitive groups are split.

\begin{te}
The group $\mathrm{GL}_3(\mathbb{Z})$ does not contain an infinite sharply 3-transitive subgroup (so, in particular, it does not contain a non-split sharply 3-transitive subgroup).
\end{te}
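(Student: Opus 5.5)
The plan is to play the rigid arithmetic of small finite-order elements of $\mathrm{GL}_3(\mathbb{Z})$ against the abundance of finite subgroups with large centralisers that a sharply 3-transitive group must contain. Let $G\leq \mathrm{GL}_3(\mathbb{Z})$ be sharply 3-transitive on an infinite set $X$. Every point stabiliser $G_x$ is then an infinite sharply 2-transitive group. Every two-point stabiliser $G_{x,y}$ is in bijection with $X\setminus\{x,y\}$, so it is infinite. Moreover, for any three distinct points $x,y,z$, the setwise stabiliser of $\{x,y,z\}$ is isomorphic to $S_3$, by sharpness. In particular $G$ contains an element $s$ of order $3$ and involutions $t$, and we can choose $t$ to swap two prescribed points.

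The arithmetic half is as follows. Let $g\in\mathrm{GL}_3(\mathbb{Z})$ have order $3$, $4$ or $6$. Its characteristic polynomial over $\mathbb{Q}$ is $(x\mp1)\Phi_m(x)$ with $m\in\{3,4,6\}$. The centraliser of $g$ in $\mathrm{GL}_3(\mathbb{Q})$ is therefore $\mathbb{Q}^\times\times K^\times$ with $K=\mathbb{Q}(\zeta_m)$ imaginary quadratic. Its integral points are commensurable with $\{\pm1\}\times\mathcal{O}_K^\times$, which is finite. So \emph{every element of order $3$, $4$ or $6$ of $\mathrm{GL}_3(\mathbb{Z})$ has finite centraliser}. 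For an involution $t$ the centraliser is commensurable with $\mathrm{GL}_2(\mathbb{Z})\times\{\pm1\}$, hence virtually free. By Malcev's theorem (used in the remarks above), all abelian subgroups of such centralisers are finitely generated, and in fact virtually cyclic, since the abelian subgroups of a virtually free group are virtually cyclic.

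The group-theoretic half is to exhibit, inside $G$, an element of order $3$ with infinite centraliser, which would be a contradiction. First I would take $s$ to be the $3$-cycle stabilising $\{x,y,z\}$ and study its action on $X$: it fixes at most two points. I would then try to build infinitely many elements commuting with $s$. If $s$ fixes two points $u,v$, then $s\in G_{u,v}$. The involution swapping $u,v$ and the structure of $G_u$ as a sharply 2-transitive group should produce a large subgroup centralising $s$, mimicking the torus of $\mathrm{PGL}_2(K)$ containing an order-$3$ element. If $s$ is fixed-point-free, I would instead pass to its normaliser via the $S_3$.

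Should this fail, the fallback is to work with involutions. The aim would then be to show that the centraliser of an involution $t\in G$ contains a non-virtually-cyclic abelian subgroup, or an infinite sharply 2-transitive section. Either would contradict the virtually free structure of centralisers of involutions in $\mathrm{GL}_3(\mathbb{Z})$, by the remark above that an infinite split sharply 2-transitive group cannot embed.

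I expect the main obstacle to be this group-theoretic step. In non-split examples such as Tent's, the stabilisers contain free groups, so centralisers of torsion elements are not automatically large. The key point to establish is that sharp 3-transitivity forces some element of order $3$ (or $4$, $6$) to have infinite centraliser, or else forces a forbidden abelian subgroup. Here I would first try the characteristic-$2$ versus characteristic-$\neq 2$ dichotomy of the involutions of $G_x$, handling the two cases separately.
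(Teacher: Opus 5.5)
\textbf{There is a genuine gap: the group-theoretic half is never proved.} Your arithmetic half is sound. The characteristic polynomial of an order-3 element $g\in\mathrm{GL}_3(\mathbb{Z})$ is the squarefree $(x-1)(x^2+x+1)$. So its centraliser in $M_3(\mathbb{Z})$ is an order in $\mathbb{Q}\times\mathbb{Q}(\zeta_3)$, and that order has finite unit group. This is a cleaner route to the paper's second lemma than the paper's, which uses the conjugacy classification of order-3 elements and an explicit computation. But the other half, which you yourself flag as the main obstacle, is where the whole content lies, and your sketches do not supply it:
\begin{itemize}
\item When $s$ fixes two points $u,v$, the two-point stabiliser $G_{u,v}$ need not be abelian in a non-split group, so there is no ``torus'' to appeal to.
\item When $s$ is fixed-point-free, passing through the finite $S_3$ gives nothing.
\item You omit the case where $s$ has exactly one fixed point.
\item The fallback via involutions fails outright. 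An infinite sharply 2-transitive section inside the centraliser of an involution gives no contradiction, because the remark you invoke only excludes \emph{split} ones. Theorem~\ref{theorem1} of this very paper places an infinite non-split sharply 2-transitive group inside a virtually free subgroup of $\mathrm{SL}_3(\mathbb{Z})$.
\end{itemize}

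\textbf{The missing idea is a direct counting argument.} It works for every element $g$ of order 3 in any infinite sharply 3-transitive group. Let $F$ be the fixed-point set of $g$. By sharpness $|F|\le 2$, so $X\setminus F$ is infinite. Fix $x\in X\setminus F$. For each $y\in X\setminus F$, the points $y,g(y),g^2(y)$ are pairwise distinct, since $3$ is prime. So there is a unique $h_y\in G$ with $h_y(y,g(y),g^2(y))=(x,g(x),g^2(x))$. Then $h_y^{-1}gh_y$ agrees with $g$ on the three distinct points $y,g(y),g^2(y)$, hence $h_y^{-1}gh_y=g$ by sharpness. Since $h_y^{-1}(x)=y$, the map $y\mapsto h_y$ is injective, so $C_G(g)$ is infinite.

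An element of order 3 always exists: the unique $g$ with $g(x,y,z)=(y,z,x)$ satisfies $g^3=1\neq g$. Combined with your arithmetic lemma, this gives the contradiction.

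Your worry about Tent's examples is therefore unfounded. There the centraliser of the order-3 element contains the factor $F_\omega$ of $F_\omega\times\mathbb{Z}/3\mathbb{Z}$, exactly as the counting argument predicts.
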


This is an immediate consequence of the following two lemmas.

\begin{lemme}
Let $G$ be an infinite group acting sharply 3-transitively on a set $X$. If $g\in G$ is an element of order 3, then the centraliser $C_G(g)$ is infinite.
\end{lemme}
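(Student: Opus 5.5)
The plan is to produce many elements of $C_G(g)$ directly from the orbit structure of $\langle g\rangle$ on $X$. The only input is sharpness: an element of $G$ that agrees with another element on three distinct points equals it.

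\emph{Step 1: $X$ is infinite and $g$ has infinitely many orbits of size $3$.} Fix three distinct points of $X$. By sharp 3-transitivity, the map sending $h\in G$ to the image under $h$ of this triple is a bijection from $G$ onto the set of ordered triples of distinct points of $X$. Hence $|G|=|X|(|X|-1)(|X|-2)$ when $X$ is finite. So $G$ infinite forces $X$ infinite. A non-trivial element fixing three distinct points would agree with the identity on a triple, so it equals the identity; thus the non-trivial element $g$ fixes at most two points. Since $g$ has order $3$, every $\langle g\rangle$-orbit has size $1$ or $3$. Therefore all but at most two points of the infinite set $X$ lie in orbits of size $3$, and there are infinitely many such orbits.

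\emph{Step 2: each size-3 orbit yields an element of $C_G(g)$.} Fix a point $a$ with orbit $\{a,ga,g^2a\}$ of size $3$. Let $a'$ be any point whose orbit $\{a',ga',g^2a'\}$ also has size $3$. By sharp 3-transitivity there is a unique $h\in G$ with
\[h(a)=a',\quad h(ga)=ga',\quad h(g^2a)=g^2a'.\]
Using $g^3=1$, we compute
\begin{align*}
g^{-1}hg(a)&=g^{-1}h(ga)=a',\\
g^{-1}hg(ga)&=g^{-1}h(g^2a)=ga',\\
g^{-1}hg(g^2a)&=g^{-1}h(a)=g^{-1}a'=g^2a'.
\end{align*}
So $g^{-1}hg$ agrees with $h$ on the triple $(a,ga,g^2a)$ of distinct points. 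By uniqueness, $g^{-1}hg=h$, that is, $h\in C_G(g)$.

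\emph{Step 3: these elements are distinct.} Different choices of $a'$ give different elements $h$, since $h(a)=a'$. By Step 1 there are infinitely many admissible points $a'$, so $C_G(g)$ is infinite.

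I do not expect a real obstacle here. The only points needing care are in Step 1: checking that $X$ is genuinely infinite, and that the at most two fixed points of $g$ leave infinitely many orbits of size $3$.
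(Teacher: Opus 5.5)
Your proof is correct and is essentially the paper's argument. The paper fixes $x$ not fixed by $g$ and, for each non-fixed $y$, takes the unique $h_y$ with $h_y(y,g(y),g^2(y))=(x,g(x),g^2(x))$, checks that $h_y^{-1}gh_y$ agrees with $g$ on three points, and uses injectivity of $y\mapsto h_y$; you use the inverse elements, and your Step 1 spells out why $X$ is infinite, which the paper leaves implicit.
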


\begin{proof}Let $F\subset X$ be the set of fixed points of $g$. Note that $\vert F\vert \leq 2$, so $X\setminus F$ is infinite. Fix a point $x\in X\setminus F$, and let $f:X\setminus F\rightarrow G$ be the function that maps every point $y\in X\setminus F$ to the unique element $h_y$ of $G$ such that $(x,g(x),g^2(x))=h_y(y,g(y),g^2(y))$. Note that $f$ is injective. Then, observe that $h_y^{-1}gh_y(y)=h_y^{-1}g(x)=g(y)$. Similarly, $h_y^{-1}gh_y$ and $g$ coincide on $g(y)$ and on $g^2(y)$. It follows that $h_y^{-1}gh_y=g$ and thus $h_y$ belongs to $C_G(g)$. Therefore, $C_G(g)$ is infinite.
\end{proof}

\begin{lemme}
The centraliser of any element of order 3 in $\mathrm{GL}_3(\mathbb{Z})$ is finite.
\end{lemme}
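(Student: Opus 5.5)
Let $g\in\mathrm{GL}_3(\mathbb{Z})$ have order 3. The plan is to decompose $\mathbb{Q}^3$ under $g$, show that the centraliser preserves this decomposition, and conclude that $C(g)$ embeds in a product of groups that are each finite.

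\textbf{Decomposition.} The minimal polynomial of $g$ divides $x^3-1=(x-1)(x^2+x+1)$ and is not $x-1$. Since $\Phi_3=x^2+x+1$ has degree 2 and the dimension is 3, the characteristic polynomial must be $(x-1)\Phi_3(x)$. So over $\mathbb{Q}$ we get $\mathbb{Q}^3=V_1\oplus V_\omega$, where $V_1=\ker(g-1)$ is one-dimensional and $V_\omega=\ker\Phi_3(g)$ is two-dimensional. Every $h$ commuting with $g$ preserves both kernels, which gives a homomorphism
\[
C_{\mathrm{GL}_3(\mathbb{Z})}(g)\to \mathrm{GL}(V_1)\times \mathrm{GL}(V_\omega).
\]
It is injective, because $V_1\oplus V_\omega=\mathbb{Q}^3$.

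\textbf{First factor.} The lattice $L_1=\mathbb{Z}^3\cap V_1$ has rank 1 and is preserved by $h$, and the same holds for $h^{-1}$. Hence $h|_{V_1}$ is an automorphism of $L_1\cong\mathbb{Z}$, so it equals $\pm1$.

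\textbf{Second factor.} The lattice $L_\omega=\mathbb{Z}^3\cap V_\omega$ has rank 2 and is stable under $g$ and under $h^{\pm1}$. The restriction $g|_{L_\omega}$ satisfies $\Phi_3$, which makes $L_\omega$ a module over $\mathbb{Z}[\omega]$, with $\omega$ acting as $g$. This module is torsion-free of rank 1, so after tensoring, $V_\omega$ becomes a one-dimensional vector space over $\mathbb{Q}(\omega)$. The restriction $h|_{V_\omega}$ commutes with $\omega$, so it is $\mathbb{Q}(\omega)$-linear, i.e.\ multiplication by some scalar $\alpha\in\mathbb{Q}(\omega)^\times$. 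Because $h$ and $h^{-1}$ both preserve $L_\omega$, multiplication by $\alpha$ and by $\alpha^{-1}$ maps the lattice $L_\omega$ into itself. Hence $\alpha$ and $\alpha^{-1}$ are integral: their characteristic polynomials on a $\mathbb{Z}$-lattice have integer coefficients. Therefore $\alpha$ is a unit of $\mathbb{Z}[\omega]$, the ring of integers of $\mathbb{Q}(\omega)$. This unit group is finite of order 6, since $\mathbb{Q}(\omega)$ is imaginary quadratic (Dirichlet).

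\textbf{Conclusion and main obstacle.} Combining the two factors, $C(g)$ injects into $\{\pm1\}\times\mu_6$, so $\lvert C(g)\rvert\le 12$ and the centraliser is finite. The delicate step is justifying that $h|_{V_\omega}$ is scalar multiplication by an algebraic unit. A cleaner alternative for that step is to note that the restrictions $h|_{L_\omega}$ form a subgroup of $\mathrm{GL}_2(\mathbb{Z})$ commuting with an element of order 3. Its image in $\mathrm{End}_{\mathbb{Q}}(V_\omega)$ lies in the commutant $\mathbb{Q}(\omega)$, and its elements have integral characteristic polynomials with determinant $\pm1$. The determinant is the norm of $\alpha$, which is positive, so it is $1$. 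The elements of $\mathbb{Q}(\omega)$ with integral characteristic polynomial and norm 1 are exactly the six roots of unity, giving the same finiteness conclusion.
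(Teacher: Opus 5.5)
Your proof is correct, but it takes a different route from the paper. The paper argues by classification. It quotes Auslander--Cook for the fact that every element of order $3$ in $\mathrm{GL}_3(\mathbb{Z})$ is conjugate to one of two explicit matrices $M_1$, $M_2$. It then states that a direct calculation gives centralisers of orders $12$ and $6$.

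You argue intrinsically instead. The centraliser preserves $V_1=\ker(g-1)$ and $V_\omega=\ker\Phi_3(g)$, together with the lattices $L_1=\mathbb{Z}^3\cap V_1$ and $L_\omega=\mathbb{Z}^3\cap V_\omega$. Hence it injects into $\mathbb{Z}^\times\times\mathbb{Z}[\omega]^\times\cong\{\pm1\}\times\mu_6$. You never need $\mathbb{Z}^3=L_1\oplus L_\omega$, which fails exactly in the $M_2$ case, because injectivity is checked over $\mathbb{Q}$. That is what lets you bypass the classification of $\mathbb{Z}[g]$-lattices altogether.

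The paper's route gives the exact orders. Your bound $12$ is attained for $M_1$, while for $M_2$ the image has index $2$. But that route depends on an external classification and a computation that is not written out.

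Your route is self-contained and explains the phenomenon. For an element of order $3$ in $\mathrm{GL}_n(\mathbb{Z})$ with characteristic polynomial $(x-1)^a\Phi_3^b$, the same argument maps the centraliser injectively into $\mathrm{GL}_a(\mathbb{Z})\times\mathrm{GL}_b(\mathbb{Z}[\omega])$. This group is finite as soon as $a,b\le 1$, which is forced when $n=3$. By contrast, the matrix $J$ used in the paper for $n=4$ has $a=2$ and $b=1$, and its centraliser indeed contains the free group $\langle A,B\rangle$.
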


\begin{proof}Let $M\in \mathrm{GL}_3(\mathbb{Z})$ be an element of order 3. It is not difficult to prove that $M$ is conjugate to one of the following two matrices (see for instance page 9 of \cite{AC91}):
\[M_1=\begin{pmatrix}
1 & 0 & 0  \\
0 & 0 & -1 \\
0 & 1 & -1
\end{pmatrix} \ \ \text{ and } \ \ M_2=\begin{pmatrix}
1 & 0 & 1  \\
0 & 0 & -1 \\
0 & 1 & -1
\end{pmatrix}\]
Finally, a calculation shows that the centraliser of $M_1$ in $\mathrm{GL}_3(\mathbb{Z})$ is of order 12 and that the centraliser of $M_2$ in $\mathrm{GL}_3(\mathbb{Z})$ is of order 6.
\end{proof}

%First, note that $G$ does not embed into $\mathrm{SL}_3(\mathbb{Z})$ because the centraliser of every element of order 3 in $\mathrm{SL}_3(\mathbb{Z})$ is finite. We will prove the following result.

\begin{rk}Note that a matrix $M$ of order 3 in $\mathrm{GL}_3(\mathbb{C})$ is diagonalisable, therefore the centraliser of $M$ in $\mathrm{GL}_3(\mathbb{C})$ is abelian. It follows that the non-split sharply 3-transitive group constructed by Tent in \cite{tentS3T} does not embed into $\mathrm{GL}_3(\mathbb{C})$. However, we do not know the answer to the following questions (a positive answer to the second question would give a positive answer to the first question):
\begin{enumerate}
    \item Is there a non-split sharply 3-transitive group such that the centraliser of any element of order 3 is abelian?
    \item Does $\mathrm{GL}_3(\mathbb{C})$ contain a non-split sharply 3-transitive subgroup?
\end{enumerate}
\end{rk}

\begin{te}The group $\mathrm{SL}_4(\mathbb{Z})$ contains a non-split sharply 3-transitive subgroup. More precisely, the group $((F_2\times \mathbb{Z}/3\mathbb{Z}){}_{\mathbb{Z}/3\mathbb{Z}}\ast {S_3}\ast_{\mathbb{Z}/2\mathbb{Z}}( \mathbb{Z}/2\mathbb{Z}\times F_2))\ast F_2$ embeds into $\mathrm{SL}_4(\mathbb{Z})$.
\end{te}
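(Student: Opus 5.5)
The plan is to imitate the proof of Theorem \ref{theorem1bis}: realise the finite pieces as explicit finite subgroups of $\mathrm{SL}_4(\mathbb{Z})$ with the right centralisers, and combine them using Lemma \ref{ping-pong} for the action on $\mathrm{P}(\mathbb{R}^4)$. The group
\[G=((F_2\times \mathbb{Z}/3\mathbb{Z}){}_{\mathbb{Z}/3\mathbb{Z}}\ast S_3\ast_{\mathbb{Z}/2\mathbb{Z}}(\mathbb{Z}/2\mathbb{Z}\times F_2))\ast F_2\]
is built in three steps, each by one application of Lemma \ref{ping-pong}. First, fix an embedding of $S_3$ into $\mathrm{SL}_4(\mathbb{Z})$ with generators $t$ of order 3 and $s$ of order 2, chosen so that each centraliser contains a biproximal matrix. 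For $t$, write $\mathbb{R}^4=W_t\oplus \mathbb{R}^2$ with $t$ a rotation of order 3 on $W_t$. A convenient choice is $t=M\oplus I_2$ with $M$ of order 3 in $\mathrm{SL}_2(\mathbb{Z})$, together with a compatible $s$ that is a signed permutation. Its centraliser then contains $I_2\oplus \mathrm{SL}_2(\mathbb{Z})$, in which we take two hyperbolic matrices $A_1,B_1$. These are not quite biproximal, since the block $I_2$ gives a repeated eigenvalue; to fix this, multiply by a hyperbolic element of the centraliser of $M$ in $\mathrm{GL}_2(\mathbb{Z})$ when one is available, or otherwise pass to a suitable reducible integer form. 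Do the same for the involution $s$, whose centraliser in $\mathrm{SL}_4(\mathbb{Z})$ contains $\mathrm{S}(\mathrm{GL}_2(\mathbb{Z})\times \mathrm{GL}_2(\mathbb{Z}))$ if $s=\mathrm{diag}(-1,-1,1,1)$ in a suitable basis, giving matrices $A_2,B_2$.

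Second, I would verify that $\langle A_1,B_1,t\rangle\cong F_2\times \mathbb{Z}/3$ after passing to powers, by ping-pong for $A_1,B_1$ inside the block where they act faithfully. Then I would show that the subgroup generated by $S_3$ and $\langle A_1,B_1\rangle$ is the amalgam over $\langle t\rangle$. Take $X_{F}$ to be small neighbourhoods of the attracting and repelling points of $A_1^{\pm1},B_1^{\pm1}$ together with their images under $\langle t\rangle$. Take $X_{S}$ to be $\sigma(X_F)$ for $\sigma\in S_3\setminus\langle t\rangle$, arranged so that it is disjoint from $X_F$. The hypotheses $[S_3:\langle t\rangle]=2$ and $[F_2\times\mathbb{Z}/3:\mathbb{Z}/3]=\infty>2$ match Lemma \ref{ping-pong} with the roles ordered correctly. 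The conditions needed are that elements of $S_3\setminus\langle t\rangle$ send $X_F$ into $X_S$, and that high powers of $A_1,B_1$ send everything outside their repelling hyperplanes into $X_F$. The second amalgam over $\langle s\rangle$ and the final free factor $F_2$ are handled in the same way. For the free factor, I conjugate a pair of biproximal matrices by an element $U$ of $\mathrm{SL}_4(\mathbb{Z})$ moving their fixed points into general position, exactly as $C=UBU^{-1}$ was used before. A normal form argument then passes from the free factor $\mathbb{Z}$ to $F_2$, as at the end of the proof of Theorem \ref{theorem1bis}.

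The main obstacle is genericity. Every attracting point must avoid every repelling hyperplane of the other generators. It must also avoid the finitely many images under the finite groups $\langle t\rangle,\langle s\rangle,S_3$, which must permute the neighbourhoods correctly and map the relevant sets into disjoint regions. This is delicate because the biproximal elements centralising $t$ necessarily preserve the eigenspace decomposition of $t$. Hence their attracting points and hyperplanes lie in special position relative to $t$, namely in its fixed plane or rotation plane. Points fixed by $t$ cannot be separated from their $t$-images, so one must check that the set $X_F$ can still be chosen $\langle t\rangle$-invariant while the elements of $S_3\setminus\langle t\rangle$ move it off itself. Concretely, I would pick explicit integer matrices, compute eigenvectors with entries in quadratic fields, as with $\sqrt5$ and $\sqrt3$ earlier, and check the finitely many incidence conditions directly. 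If necessary, I would conjugate the second centraliser block by a generic integer matrix commuting with $s$ to break any accidental incidences.
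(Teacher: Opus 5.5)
Your plan is essentially the paper's proof: your $t=M\oplus I_2$ and signed-permutation $s$ are the paper's $J$ and $I$, and your $A_1,B_1\in I_2\oplus\mathrm{SL}_2(\mathbb{Z})$ and $A_2,B_2$ centralising $s$ are its $A,B$ and $C,D$. Everything is then done by ping-pong on $\mathrm{P}(\mathbb{R}^4)$ with explicitly checked incidences. The one difference is that the paper gets the amalgam over $\mathbb{Z}/3\mathbb{Z}$ from $\langle I,A,B\rangle\cong\mathbb{Z}/2\mathbb{Z}\ast F_2$ and $\langle I,J,A,B\rangle\cong\mathbb{Z}/3\mathbb{Z}\rtimes(\mathbb{Z}/2\mathbb{Z}\ast F_2)$, instead of a direct amalgamated ping-pong.

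One correction: the repeated eigenvalue $1$ is harmless, because biproximality only requires the top and bottom eigenvalues to be simple, so $I_2\oplus h$ with $\mathrm{tr}\,h>2$ already works. Your proposed fix is impossible anyway, since the centraliser of $M$ in $\mathrm{GL}_2(\mathbb{Z})$ is finite of order $6$. Likewise, an involution normalising $\langle M\oplus I_2\rangle$ is never $\mathbb{Z}$-conjugate to $\mathrm{diag}(-1,-1,1,1)$, but its centraliser still contains a finite-index subgroup of $\mathrm{SL}_2(\mathbb{Z})\times\mathrm{SL}_2(\mathbb{Z})$, which is all you need.
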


\begin{proof}
Consider the following four matrices in $\mathrm{SL}_4(\mathbb{Z})$: 
\[I=\begin{pmatrix}
0 & 1 & 0 & 0  \\
1 & 0 & 0 & 0 \\
0 & 0 & -1 & 0 \\
0 & 0 & 0 & 1 
\end{pmatrix}, \ J=\begin{pmatrix}
0 & -1 & 0 & 0  \\
1 & -1 & 0 & 0 \\
0 & 0 & 1 & 0 \\
0 & 0 & 0 & 1 
\end{pmatrix}, \ A=\begin{pmatrix}
1 & 0 & 0 & 0  \\
0 & 1 & 0 & 0 \\
0 & 0 & 2 & 1 \\
0 & 0 & 1 & 1 
\end{pmatrix}, \ B=\begin{pmatrix}
1 & 0 & 0 & 0  \\
0 & 1 & 0 & 0 \\
0 & 0 & 3 & 1 \\
0 & 0 & 2 & 1 
\end{pmatrix}.\]

Note that $I$ is of order 2, that $J$ is of order 3, that $\langle I\rangle$ normalises $\langle J\rangle$ but that $I$ and $J$ do not commute, so $\langle I,J\rangle$ is isomorphic to the symmetric group $S_3$. Note also that $A,B$ commute with $J$. We will prove that $\langle I,A,B\rangle\simeq \mathbb{Z}/2\mathbb{Z}\ast F_2$. It follows that $\langle I,J,A,B\rangle$ is isomorphic to $\mathbb{Z}/3\mathbb{Z}\rtimes (\mathbb{Z}/2\mathbb{Z}\ast F_2)$, which is isomorphic to $(F_2\times \mathbb{Z}/3\mathbb{Z})\ast_{\mathbb{Z}/3\mathbb{Z}}S_3$.

The eigenvalues of $A$ are $\alpha_1=(3+\sqrt{5})/2>\alpha_2=\alpha_3=1>\alpha_4=(3-\sqrt{5})/2>0$, so $A$ is biproximal. The corresponding eigenvectors are 
\[
a_1=\begin{pmatrix}0\\0\\1+\sqrt5\\2\end{pmatrix},\quad
a_2=\begin{pmatrix}0\\0\\1-\sqrt5\\2\end{pmatrix},\quad
a_3=\begin{pmatrix}1\\0\\0\\0\end{pmatrix},\quad
a_4=\begin{pmatrix}0\\1\\0\\0\end{pmatrix}.
\] 
The repelling hyperplanes are \[H_A^{+}=\lbrace [x_1:x_2:x_3:x_4]\in\mathrm{P}(\mathbb{R}^4) \ \vert \ 2x_3-(1-\sqrt{5})x_4=0\rbrace,\]
\[H_A^{-}=\lbrace [x_1:x_2:x_3:x_4]\in\mathrm{P}(\mathbb{R}^4) \ \vert \ 2x_3-(1+\sqrt{5})x_4=0\rbrace.\]

The eigenvalues of $B$ are $\beta_1=2+\sqrt{3}>\beta_2=\beta_3=1>\beta_4=2-\sqrt{3}>0$, so $B$ is biproximal. The corresponding eigenvectors are \[
b_1=\begin{pmatrix}0\\0\\1+\sqrt3\\2\end{pmatrix},\quad
b_2=\begin{pmatrix}0\\0\\1-\sqrt3\\2\end{pmatrix},\quad
b_3=\begin{pmatrix}1\\0\\0\\0\end{pmatrix},\quad b_4=\begin{pmatrix}0\\1\\0\\0\end{pmatrix}.
\]
The repelling hyperplanes are \[H_B^{+}=\lbrace [x_1:x_2:x_3:x_4]\in\mathrm{P}(\mathbb{R}^4) \ \vert \ 2x_3-(1-\sqrt{3})x_4=0\rbrace,\]\[H_B^{-}=\lbrace [x_1:x_2:x_3:x_4]\in\mathrm{P}(\mathbb{R}^4) \ \vert \ 2x_3-(1+\sqrt{3})x_4=0\rbrace.\]

Note that $P_A^{+}$ and $P_A^{-}$ do not belong to $H_B^{+}\cup H_B^{-}$ and that $P_B^{+}$ and $P_B^{-}$ do not belong to $H_A^{+}\cup H_A^{-}$. For $M\in\lbrace A,B\rbrace$, define $N_M=\bar{B}(P_M^{+},\varepsilon)\cup \bar{B}(P_M^{-},\varepsilon)$ where $\bar{B}(P,\varepsilon)$ denotes the closed ball of radius $\varepsilon$ centred at $P$ in $\mathrm{P}(\mathbb{R}^4)$. For $\varepsilon$ sufficiently small, $\bar{B}(P_M^{+},\varepsilon)$ and $\bar{B}(P_M^{-},\varepsilon)$ have an empty intersection, and we have $N_A\cap (N_B\cup H_B^{+}\cup H_B^{-})=\varnothing$ and $N_B\cap (N_A\cup H_A^{+}\cup H_A^{-})=\varnothing$. Taking sufficiently large powers of $A$ and $B$, we can therefore assume that $A^{\pm 1}(N_B)\subset \bar{B}(P_A^{\pm 1},\varepsilon)\subset N_A$ and $B^{\pm 1}(N_A)\subset \bar{B}(P_B^{\pm 1},\varepsilon)\subset N_B$, so the group $\langle A,B\rangle$ is isomorphic to the free group $F_2$ by the ping-pong lemma \ref{ping-pong}.

Define the ping-pong sets $X=N_A\cup N_B$ for $\langle A,B\rangle$ and $Y=I(X)$ for $\langle I\rangle$. Notice that the points $I(P_A^{\pm} )$ and $I(P_B^{\pm})$ are pairwise distinct from the points $P_A^{\pm 1}$ and $P_B^{\pm 1}$, so the sets $X$ and $Y$ are disjoint, provided the radius $\varepsilon$ of the balls is small enough. Note that $A^{\pm 1}(X\cup Y)\subset \bar{B}(P_A^{\pm 1},\varepsilon)\subset N_A\subset X$ and $B^{\pm 1}(X\cup Y)\subset\bar{B}(P_B^{\pm 1},\varepsilon)\subset N_B\subset X$ (again, after replacing $A$ and $B$ by $A^k$ and $B^k$ for $k$ sufficiently large), and that $I(X)\subset Y$ by definition of $Y$, so $\langle I,A,B\rangle$ is isomorphic to $ \mathbb{Z}/2\mathbb{Z}\ast F_2$ by the ping-pong lemma \ref{ping-pong}.

Next, we examine the following two matrices $C,D\in \mathrm{SL}_4(\mathbb{Z})$, which belong to the centraliser of the involution $I$:
\[C=\begin{pmatrix}
2 & -1 & 1 & 0  \\
-1 & 2 & -1 & 0 \\
1 & -1 & 1 & 0 \\
0 & 0 & 0 & 1 
\end{pmatrix}, \ \ D=\begin{pmatrix}
4 & -1 & 1 & 1  \\
-1 & 4 & -1 & 1 \\
2 & -2 & 1 & 0 \\
1 & 1 & 0 & 1
\end{pmatrix}.\]

The eigenvalues of $C$ are $\gamma_1=2+\sqrt{3}>\gamma_2=\gamma_3=1>\gamma_4=2-\sqrt{3}\simeq 0.27>0$, so $C$ is biproximal. The corresponging eigenvectors are
\[c_1=\begin{pmatrix}1\\-1\\\sqrt3-1\\0\end{pmatrix},\quad
c_2=\begin{pmatrix}1\\1\\0\\0\end{pmatrix},\quad c_3=\begin{pmatrix}0\\0\\0\\1\end{pmatrix},\quad c_4=\begin{pmatrix}1\\-1\\-(\sqrt3+1)\\0\end{pmatrix}.
\]
The repelling hyperplanes are \[H_C^{+}=\lbrace [x_1:x_2:x_3:x_4]\in\mathrm{P}(\mathbb{R}^4) \ \vert \ (\sqrt{3}+1)(x_1-x_2)+2x_3=0\rbrace,\]\[H_C^{-}=\lbrace [x_1:x_2:x_3:x_4]\in\mathrm{P}(\mathbb{R}^4) \ \vert \ (1-\sqrt{3})(x_1-x_2)+2x_3=0\rbrace.\] 

The eigenvalues of $D$ are $\delta_1=3+2\sqrt{2}>\delta_2=2+\sqrt{3}>\delta_3=2-\sqrt{3}>\delta_4=3-2\sqrt{2}\simeq 0.17>0$, so $D$ is biproximal. The corresponging eigenvectors are
\[
d_1=\begin{pmatrix}1+\sqrt2\\-(1+\sqrt2)\\2\\0\end{pmatrix},\quad
d_2=\begin{pmatrix}1+\sqrt3\\1+\sqrt3\\0\\2\end{pmatrix},\quad
d_3=\begin{pmatrix}1-\sqrt3\\1-\sqrt3\\0\\2\end{pmatrix},\quad
d_4=\begin{pmatrix}1-\sqrt2\\-(1-\sqrt2)\\2\\0\end{pmatrix}.
\]

The repelling hyperplanes are \[H_D^{+}=\lbrace [x_1:x_2:x_3:x_4]\in\mathrm{P}(\mathbb{R}^4) \ \vert \ x_1-x_2+(\sqrt{2}-1)x_3=0\rbrace,\]\[H_D^{-}=\lbrace [x_1:x_2:x_3:x_4]\in\mathrm{P}(\mathbb{R}^4) \ \vert \ x_1-x_2-(1+\sqrt{2})x_3=0\rbrace.\]

%Note that $P_C$ does not belong to $R_D$ and that $P_D$ does not belong to $R_C$. Let $N_C,N_D$ be two closed balls of sufficiently small radius centred at $P_C,P_D$ respectively in $\mathrm{P}(\mathbb{R}^4)$ such that $N_C\cap (N_D\cup R_D)=\varnothing$ and $N_D\cap (N_C\cup R_C)=\varnothing$. After replacing $C$ and $D$ by $C^k$ and $D^k$ for some large $k$ if necessary, we can therefore assume that $C(N_D)\subset N_C$ and $D(N_C)\subset N_D$, so $\langle C,D\rangle\simeq F_2$ by the ping-pong lemma \ref{ping-pong}.

Note that $P_C^{+}$ and $P_C^{-}$ do not belong to $H_D^{+}\cup H_D^{-}$ and that $P_D^{+}$ and $P_D^{-}$ do not belong to $H_C^{+}\cup H_C^{-}$. For $M\in\lbrace C,D\rbrace$, define $N_M=\bar{B}(P_M^{+},\varepsilon)\cup \bar{B}(P_M^{-},\varepsilon)$ where $\bar{B}(P,\varepsilon)$ denotes the closed ball of radius $\varepsilon$ centred at $P$ in $\mathrm{P}(\mathbb{R}^4)$. For $\varepsilon$ sufficiently small, $\bar{B}(P_M^{+},\varepsilon)$ and $\bar{B}(P_M^{-},\varepsilon)$ have empty intersection, and we have $N_C\cap (N_D\cup H_D^{+}\cup H_D^{-})=\varnothing$ and $N_D\cap (N_C\cup H_C^{+}\cup H_C^{-})=\varnothing$. By taking sufficiently large powers of $C$ and $D$, we can therefore assume that $C^{\pm 1}(N_D)\subset \bar{B}(P_C^{\pm 1},\varepsilon)\subset N_C$ and $D^{\pm 1}(N_C)\subset \bar{B}(P_D^{\pm 1},\varepsilon)\subset N_D$, so the group $\langle C,D\rangle$ is isomorphic to the free group $F_2$ by the ping-pong lemma \ref{ping-pong}.

Define $H=\langle I,J,A,B\rangle$ and $K=\langle I,C,D\rangle$. We proved that $H\simeq (F_2\times \mathbb{Z}/3\mathbb{Z})\ast_{\mathbb{Z}/3\mathbb{Z}}S_3$ and that $K\simeq \mathbb{Z}/2\mathbb{Z}\times F_2$. We will prove that the group $\langle H,K\rangle$ is isomorphic to $H\ast_{\mathbb{Z}/2\mathbb{Z}} K$ with $\mathbb{Z}/2\mathbb{Z}$ identified with $\langle I\rangle$ in $H$ and $K$. Define $X_K=N_C\cup N_D\cup I(N_C)\cup I(N_D)$ and $X_H=N_A\cup N_B\cup I(N_A)\cup I(N_B)\cup J(X_K)\cup J^2(X_K)$. Note that $I(X_K)=X_K$ and $I(X_H)=X_H$ by definition of $X_K$ and $X_H$ and by the fact that $IJ=J^2I$ and $IJ^2=JI$.

A thorough verification shows that the set $\lbrace P^{\pm 1}_C,P^{\pm 1}_D\rbrace \cup H^{\pm 1}_C\cup H^{\pm 1}_D$
has an empty intersection with the following set:
\[
\begin{aligned}
\lbrace 
& P^{\pm 1}_A, P^{\pm 1}_B, I(P^{\pm 1}_A), I(P^{\pm 1}_B), J(P^{\pm 1}_C), J(P^{\pm 1}_D), \\
& JI(P^{\pm 1}_C), JI(P^{\pm 1}_D), J^2(P^{\pm 1}_C), J^2(P^{\pm 1}_D), J^2I(P^{\pm 1}_C), J^2I(P^{\pm 1}_D)
\rbrace.
\end{aligned}
\]
Thus, by taking large powers of $C$ and $D$, we may assume that $C^{\pm 1}(X_H\cup X_K)\subset N_C\subset X_K$ and $D^{\pm 1}(X_H\cup X_K)\subset N_D\subset X_K$, so for every $g\in K\setminus \langle I\rangle$ we have $g(X_H)\subset X_K$.

Then, one can verify that $\lbrace P^{\pm 1}_A,P^{\pm 1}_B\rbrace \cup H^{\pm 1}_A\cup H^{\pm 1}_B$ has an empty intersection with the set $\lbrace P^{\pm 1}_C,P^{\pm 1}_D,I(P^{\pm 1}_C),I(P^{\pm 1}_D)\rbrace$, so (after taking powers of $A$ and $B$ if necessary) we have $A^{\pm 1}(X_K)\subset N_A\subset X_H$ and $B^{\pm 1}(X_K)\subset N_B\subset X_H$, and moreover $A^{\pm 1}(X_H)\subset N_A\subset X_H$ and $B^{\pm 1}(X_H)\subset N_B\subset X_H$. Note also that $J(X_K)\subset X_H$ and $J^2(X_K)\subset X_H$ by definition of $X_H$.
Let $g\in H\setminus \langle I\rangle$. We can write $g=g'J^\delta$ with $\delta\in\lbrace 0,1,2\rbrace$ and $g'$ a reduced word in $I,A^{\pm 1},B^{\pm 1}$. If $\delta\neq 0$ then $g(X_K)\subset g'(X_H)\subset X_H$ since $J$ maps $X_K$ into $X_H$ and $I,A^{\pm 1},B^{\pm 1}$ map $X_H$ into $X_H$. Then, suppose that $\delta=0$. Write $g=g''\ell$ with $\ell\in\lbrace I,A^{\pm 1},B^{\pm 1}\rbrace$ and $g''$ a reduced word whose last letter is not $\ell^{-1}$. If $\ell\neq I$, we have $g(X_K)\subset g''(X_H)\subset X_H$. If $\ell=I$ then $g(X_K)\subset g''(X_K)$ (because $I(X_K)=X_K$), but $g\neq I$ so we can write $g=g'''\ell'\ell$ with $\ell'\in \lbrace A^{\pm 1},B^{\pm 1}\rbrace$ and we conclude in the same way. Hence, for every $g\in H\setminus \langle I\rangle$ we have $g(X_K)\subset X_H$. 

Conclusion: the group $\langle H,K\rangle$ is isomorphic to $H\ast_{\mathbb{Z}/2\mathbb{Z}} K$, and thus to \[(F_2\times \mathbb{Z}/3\mathbb{Z}){}_{\mathbb{Z}/3\mathbb{Z}}\ast {S_3}\ast_{\mathbb{Z}/2\mathbb{Z}}( \mathbb{Z}/2\mathbb{Z}\times F_2).\] Finally, it is an exercise to find two matrices $E,F$ such that $E^{\pm 1}$ and $F^{\pm 1}$ are proximal and verify $(P^{\pm 1}_E\cup H^{\pm 1}_E)\cap (P^{\pm 1}_F\cup H^{\pm 1}_F)=\varnothing$ and $(P^{\pm 1}_E\cup H^{\pm 1}_E)\cap (X^{\pm 1}_H\cup X^{\pm 1}_K)=\varnothing$ and $(P^{\pm 1}_F\cup H^{\pm 1}_F)\cap (X_H\cup X_K)=\varnothing$. Therefore, the group $\langle H,K,E,F\rangle$ is isomorphic to $G$ by the ping-pong lemma.\end{proof}

%\renewcommand{\refname}{Bibliography}
%\bibliographystyle{plain}
%\bibliography{mybibliography}
\printbibliography

@article{Malcev,
 author = {Mal'tsev, A. I.},
 title = {On certain classes of infinite solvable groups},
 fjournal = {Translations. Series 2. American Mathematical Society},
 journal = {Transl., Ser. 2, Am. Math. Soc.},
 issn = {0065-9290},
 volume = {2},
 pages = {1--21},
 year = {1956},
 language = {English},
 doi = {10.1090/trans2/002/01},
 zbMATH = {3117586},
 Zbl = {0070.25404}
}

@article{AC91,
 author = {Auslander, L. and Cook, M.},
 title = {An algebraic classification of the three-dimensional crystallographic groups},
 fjournal = {Advances in Applied Mathematics},
 journal = {Adv. Appl. Math.},
 issn = {0196-8858},
 volume = {12},
 number = {1},
 pages = {1--21},
 year = {1991},
 language = {English},
 doi = {10.1016/0196-8858(91)90002-Z},
 zbMATH = {4208382},
 Zbl = {0731.20030}
}

@book{BN94,
 author = {Borovik, Alexandre and Nesin, Ali},
 title = {Groups of finite {Morley} rank},
 fseries = {Oxford Logic Guides},
 series = {Oxf. Logic Guides},
 volume = {26},
 isbn = {0-19-853445-0},
 year = {1994},
 publisher = {Oxford: Clarendon Press},
 language = {English},
 zbMATH = {705165},
 Zbl = {0816.20001}
}

@misc{nuez_sullivan,
 author = {Gonz{\'a}lez, Javier de la Nuez and Sullivan, Rob},
 title = {Sharply k-transitive actions on ultrahomogeneous structures},
 year = {2025},
 howpublished = {Preprint, {arXiv}:2502.11166 [math.{GR}] (2025)},
 url = {https://arxiv.org/abs/2502.11166},
 arXiv = {arXiv:2502.11166}
}

@article{glasnergulko,
 author = {Glasner, Yair and Gulko, Dennis},
 title = {Non-split linear sharply 2-transitive groups},
 fjournal = {Proceedings of the American Mathematical Society},
 journal = {Proc. Am. Math. Soc.},
 issn = {0002-9939},
 volume = {149},
 number = {6},
 pages = {2305--2317},
 year = {2021},
 language = {English},
 doi = {10.1090/proc/15360},
 zbMATH = {7337047},
 Zbl = {1535.20010}
}

@misc{amelio,
 author = {Amelio, Marco},
 title = {Non-split sharply 2-transitive groups of bounded exponent},
 year = {2025},
 howpublished = {Preprint, {arXiv}:2509.11958 [math.{GR}] (2025)},
 url = {https://arxiv.org/abs/2509.11958},
 arXiv = {arXiv:2509.11958}
}

@book{lyndonschupp,
 author = {Lyndon, Roger C. and Schupp, Paul E.},
 title = {Combinatorial group theory.},
 edition = {Reprint of the 1977 ed.},
 fseries = {Classics in Mathematics},
 series = {Class. Math.},
 issn = {1431-0821},
 isbn = {3-540-41158-5},
 year = {2001},
 publisher = {Berlin: Springer},
 language = {English},
 zbMATH = {1554175},
 Zbl = {0997.20037}
}

@article{tentS3T,
 author = {Tent, Katrin},
 title = {Sharply 3-transitive groups.},
 fjournal = {Advances in Mathematics},
 journal = {Adv. Math.},
 issn = {0001-8708},
 volume = {286},
 pages = {722--728},
 year = {2016},
 language = {English},
 doi = {10.1016/j.aim.2015.09.018},
 zbMATH = {6506330},
 Zbl = {1336.20002}
}

@article{amelio_andre_tent,
 author = {Amelio, Marco and Andr{\'e}, Simon and Tent, Katrin},
 title = {Non-split sharply {{\(2\)}}-transitive groups of odd positive characteristic},
 fjournal = {IMRN. International Mathematics Research Notices},
 journal = {Int. Math. Res. Not.},
 issn = {1073-7928},
 volume = {2025},
 number = {19},
 pages = {33},
 note = {Id/No rnaf294},
 year = {2025},
 language = {English},
 doi = {10.1093/imrn/rnaf294},
 zbMATH = {8104101}
}

@article{andre_guir_fin_gen_simple,
  title={Finitely generated simple sharply 2-transitive groups},
  author={Andr{\'e}, Simon and Guirardel, Vincent},
  journal={Compositio Mathematica},
  volume={160},
  number={8},
  pages={1941--1957},
  year={2024},
  publisher={London Mathematical Society}
}

@article {andre_tent,
    AUTHOR = {Andr\'{e}, Simon and Tent, Katrin},
     TITLE = {Simple sharply 2-transitive groups},
   JOURNAL = {Trans. Amer. Math. Soc.},
  FJOURNAL = {Transactions of the American Mathematical Society},
    VOLUME = {376},
      YEAR = {2023},
    NUMBER = {6},
     PAGES = {3965--3993},
      ISSN = {0002-9947,1088-6850},
   MRCLASS = {20B22 (20E06 20E32)},
  MRNUMBER = {4586803},
       DOI = {10.1090/tran/8846},
       URL = {https://doi.org/10.1090/tran/8846},
}

@book{kerby,
  title={On infinite sharply multiply transitive groups},
  author={Kerby, William},
  number={6},
  year={1974},
  publisher={Vandenhoeck \& Ruprecht}
}

@article{rips_segev_tent,
  title={A sharply 2-transitive group without a non-trivial abelian normal subgroup},
  author={Rips, Eliyahu and Segev, Yoav and Tent, Katrin},
  journal={J. Eur. Math. Soc.(JEMS)},
  volume={19},
  number={10},
  pages={2895--2910},
  year={2017}
}

@article{rips_tent,
  title={Sharply 2-transitive groups of characteristic 0},
  author={Rips, Eliyahu and Tent, Katrin},
  journal={Journal f{\"u}r die reine und angewandte Mathematik (Crelles Journal)},
  volume={2019},
  number={750},
  pages={227--238},
  year={2019},
  publisher={De Gruyter}
}

@article{tent2,
  title={From the Cherlin-Zilber Conjecture via sharply 2-transitive groups to the
Burnside problem},
  author={Tent, Katrin},
  journal={Proc. of the ICM 2026 (to appear)},
  year={2025}
}

@article{tent,
  title={Infinite sharply multiply transitive groups},
  author={Tent, Katrin},
  journal={Jahresbericht der Deutschen Mathematiker-Vereinigung},
  volume={118},
  number={2},
  pages={75--85},
  year={2016},
  publisher={Springer}
}

@article{tent_ziegler2,
 author = {Tent, Katrin and Ziegler, Martin},
 title = {Sharply 2-transitive groups.},
 fjournal = {Advances in Geometry},
 journal = {Adv. Geom.},
 issn = {1615-715X},
 volume = {16},
 number = {1},
 pages = {131--134},
 year = {2016},
 language = {English},
 doi = {10.1515/advgeom-2015-0047},
 zbMATH = {6535875},
 Zbl = {1343.20002}
}

\vspace{1cm}

\setlength{\parindent}{0pt}
Marco Amelio
\\
Institute for Algebra and Geometry at the Karlsruhe Insitut für Technologie (KIT)
\\
Englerstra{\ss}e 2, 76131 Karlsruhe, Germany.
\\
Email address: \href{mailto:marco.amelio@kit.edu}{marco.amelio@kit.edu}

\bigskip

Simon Andr{\' e}
\\
Sorbonne Universit{\' e} et Universit{\' e} Paris Cit{\' e}
\\
CNRS, Institut de mathématiques de Jussieu - Paris Rive Gauche
\\
75005 Paris, France.
\\
Email address: \href{mailto:simon.andre@imj-prg.fr}{simon.andre@imj-prg.fr}

\end{document}